\documentclass[11pt,letterpaper]{article}
\usepackage[T1]{fontenc}
\usepackage[utf8]{inputenc}
\usepackage{lmodern}
\usepackage{authblk}
\usepackage{amssymb,amsfonts}
\usepackage[all,arc]{xy}
\usepackage{enumerate}
\usepackage{mathrsfs}
\usepackage{amsthm}
\usepackage{amsmath} 
\usepackage{tikz}
\usepackage{mathtools}
\usepackage{tikz}
\usepackage{float}
\usepackage{bbm}
\usepackage{hyperref}
\hypersetup{
colorlinks=true,
}

\newtheorem{thm}{Theorem}[section]

\newtheorem{defn}[thm]{Definition}

\newtheorem{exmp}[thm]{Example}

\newtheorem{rem}[thm]{Remark}

\makeatletter
\let\c@equation\c@thm
\makeatother

\title{Derivative Type Mapping Theorem for the Interpolative Berinde Weak Contraction in Metric Spaces with Application}
\author[1]{Clement Boateng Ampadu\thanks{profampadu@gmail.com}}
\affil[1]{Independent Researcher, USA}

\usepackage{hyperref}% http://ctan.org/pkg/hyperref
\AtBeginDocument{%
  \let\oldref\ref% 
  \def\ref{\oldref*}}

 \showoutput
\begin{document}

  \maketitle

\begin{abstract}
\noindent  Olatinwo \cite{3} introduced contractive definitions of the derivative type, and gave a new characterization of the Banach contraction principle and fixed point theorems for contractions defined implicitly. On the other hand Ampadu et.al \cite{4} introduced derivative type contractions in the setting of multiplictive metric spaces. In this paper, we have obtained a fixed point theorem of the derivative type for interpolative Berinde weak contractive mappings\cite{2} in the setting of metric spaces. An example is given to illustrate the main result of the paper. Finally, we apply our result to the Fredholm integral equation.
\end{abstract}

\begin{flushleft}
\textbf {AMS SUBJECT CLASSIFICATIONS:}  47H10, 54H25  
\end{flushleft} 
\begin{flushleft}
\textbf{KEYWORDS AND PHRASES:} Derivative, interpolative Berinde weak contraction, metric space, Fredholm integral equation, fixed point theorem
\end{flushleft}

               \hypersetup{linkcolor=blue}
             \tableofcontents

\section{\textbf{Preliminaries}}

\begin{thm} \cite{1} Let $(X,d)$ be a metric space, and suppose $T:X \mapsto X$ is a mapping satisfying the following contractive condition
$$d(Tx,Ty)\leq k d(x,y)$$ for all $x,y\in X$ and $k\in [0,1)$. If $(X,d)$ is complete, then $T$ has a unique fixed point.
\end{thm}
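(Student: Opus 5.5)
The plan is the classical Picard iteration argument. Fix an arbitrary $x_0\in X$ and define $x_{n+1}=Tx_n$ for $n\geq 0$. Applying the contractive condition to the consecutive pair $x_n,x_{n-1}$ gives $d(x_{n+1},x_n)=d(Tx_n,Tx_{n-1})\leq k\,d(x_n,x_{n-1})$. Induction on $n$ then yields
$$d(x_{n+1},x_n)\leq k^n d(x_1,x_0)\quad\text{for all } n\geq 0.$$

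The main step is to show that $(x_n)$ is Cauchy. For $m>n$, the triangle inequality and the geometric bound give
$$d(x_m,x_n)\leq \sum_{j=n}^{m-1} d(x_{j+1},x_j)\leq \Big(\sum_{j=n}^{m-1}k^j\Big) d(x_1,x_0)\leq \frac{k^n}{1-k}\,d(x_1,x_0).$$
This bound tends to $0$ as $n\to\infty$ because $0\leq k<1$. This is the only place where $k<1$ is really used, and it is the step I expect to need the most care. The bound is uniform in $m$, which is exactly what the Cauchy property requires. The degenerate case $k=0$ is harmless, since then $x_n=x_1$ for all $n\geq 1$.

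By completeness, $x_n\to x^\ast$ for some $x^\ast\in X$. The contractive condition makes $T$ Lipschitz, hence continuous. Alternatively, one can argue directly:
$$d(Tx^\ast,x^\ast)\leq d(Tx^\ast,Tx_n)+d(x_{n+1},x^\ast)\leq k\,d(x^\ast,x_n)+d(x_{n+1},x^\ast)\to 0,$$
so $Tx^\ast=x^\ast$.

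For uniqueness, suppose $Ty^\ast=y^\ast$ as well. Then $d(x^\ast,y^\ast)=d(Tx^\ast,Ty^\ast)\leq k\,d(x^\ast,y^\ast)$, so $(1-k)\,d(x^\ast,y^\ast)\leq 0$. Since $k<1$, this forces $d(x^\ast,y^\ast)=0$, that is, $x^\ast=y^\ast$.
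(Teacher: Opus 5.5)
Your proof is correct and complete: it is the classical Picard-iteration argument, with the bound $d(x_m,x_n)\leq \frac{k^n}{1-k}\,d(x_1,x_0)$ giving the Cauchy property uniformly in $m$, and with the fixed-point and uniqueness steps handled cleanly. The paper does not prove this statement (it quotes it as Banach's principle), but your argument has the same structure the paper uses for its Theorem 2.3: iterate, bound consecutive steps geometrically, telescope with the triangle inequality to get a Cauchy sequence, pass to the limit in the contractive inequality, and then prove uniqueness.
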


\begin{defn} \cite{2} Let $(X,d)$ be a metric space. We say $T:X\mapsto X$ is an interpolative Berinde weak operator if it satisfies
$$d(Tx,Ty)\leq \lambda d(x,y)^{\alpha} d(x,Tx)^{1-\alpha}$$ where $\lambda \in [0,1)$ and $\alpha \in (0,1)$ for all $x,y\in X\backslash Fix(T)$.
\end{defn}

\begin{defn} \cite{3} Let $(X,d)$ be a metric space. $X$ is said to be $d$-bounded if $$\delta_{d}(X)=\sup \{d(x,y)| x,y\in X\}<\infty$$
\end{defn}

\begin{defn} \cite{3} Let $(X,d)$ be a metric space. A map $T:X\mapsto X$ is said to be a  contraction of the derivative type if there exists $k\in [0,1)$ such that for every $x,y\in X$, we have $$\frac{d\varphi}{dt}\bigg|_{t=d(Tx,Ty)} \leq k \frac{d\varphi}{dt}\bigg|_{t=d(x,y)}$$ where $\varphi:\mathbb{R}^{+}\mapsto \mathbb{R}^{+}$ is a function such that
$\frac{d\varphi}{dt}\bigg|_{t=\epsilon} >0$ for each $\epsilon>0$.
\end{defn}

\begin{thm}\cite{3} Let $(X,d)$ be a complete metric space. and $T:X\mapsto X$ be a contraction of the derivative type.. Suppose that $X$ is $d$-bounded. Let
$\varphi:\mathbb{R}^{+}\mapsto \mathbb{R}^{+}$ be a function such that $\frac{d\varphi}{dt}\bigg|_{t=\epsilon} >0$ for each $\epsilon>0$. Then $T$ has a unique fixed point $z\in X$ such that for each $x\in X$. $\lim_{n\to\infty} T^{n}(x)=z$
\end{thm}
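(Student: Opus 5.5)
The plan is to run the Picard-iteration argument of the Banach principle in the preceding Theorem of \cite{1}, with the derivative $\psi(t):=\frac{d\varphi}{dt}\big|_{t}$ playing the role of the metric. Fix $x\in X$ and put $x_n=T^nx$. Applying the contractive condition of the derivative type to the pair $(T^{n-1}x,T^{n-1+m}x)$ and iterating $n$ times should give
$$\psi\bigl(d(x_n,x_{n+m})\bigr)\le k\,\psi\bigl(d(x_{n-1},x_{n+m-1})\bigr)\le\cdots\le k^n\,\psi\bigl(d(x,T^mx)\bigr)\qquad(n,m\ge 1).$$

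The $d$-boundedness of $X$ will be used to make the right-hand side uniformly small. Since $d(x,T^mx)\le\delta_d(X)<\infty$ for every $m$, I would aim to bound $\psi(d(x,T^mx))$ by a constant $M$ independent of $m$. This would give $\psi(d(x_n,x_{n+m}))\le k^nM\to0$ uniformly in $m$.

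The key step, which I expect to be the main obstacle, is passing from $\psi(d(x_n,x_{n+m}))\to0$ to $d(x_n,x_{n+m})\to0$. It also requires justifying the bound $M$ in the previous step. The stated hypothesis only says $\psi(\epsilon)>0$ for $\epsilon>0$, which by itself gives neither of these. So I would first try to extract from the intended setting (as in \cite{3}) that $\psi$ is bounded on $[0,\delta_d(X)]$ and bounded below by a positive constant on each $[\epsilon,\delta_d(X)]$. For example, this holds if $\psi$ is continuous on $(0,\infty)$, or monotone, or if $\varphi$ is one of the concrete choices used there. Under the lower-bound property, if $d(x_n,x_{n+m})\ge\epsilon$ along some subsequence, then $\psi(d(x_n,x_{n+m}))\ge c_\epsilon>0$, which contradicts the uniform convergence to $0$. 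Hence $(x_n)$ is Cauchy, and completeness yields a limit $z$.

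For the fixed point I would show $Tz=z$ by applying the condition to the pair $(x_n,z)$. This gives $\psi(d(x_{n+1},Tz))\le k\,\psi(d(x_n,z))$, which tends to $0$ provided $\psi(t)\to0$ as $t\to0^+$ (or after interpreting $\psi(0)$ suitably). The same lower-bound argument as above then gives $x_{n+1}\to Tz$, and uniqueness of limits forces $Tz=z$.

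Uniqueness of the fixed point is direct. If $Tz=z$ and $Tw=w$ with $z\neq w$, the condition gives $\psi(d(z,w))\le k\,\psi(d(z,w))$. Since $\psi(d(z,w))>0$ and $k<1$, this is a contradiction. The same uniqueness shows that the limit $z$ does not depend on the starting point $x$, which gives $\lim_{n\to\infty}T^n(x)=z$ for every $x\in X$.
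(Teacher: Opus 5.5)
The obstacle you flag is genuine and cannot be overcome from the stated hypotheses, because the statement is false as written. Here is a counterexample.

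Let $\varphi_1(t)=t+(t-1)^2\sin\frac{1}{t-1}+(t-1)^3$, with $\varphi_1(1)=1$. Put $\psi(t)=t\,\varphi_1'(t)$, which is the derivative of $\varphi(t)=t\varphi_1(t)-\int_0^t\varphi_1\ge 0$. For $\epsilon=t-1\ne 0$ one checks that $\varphi_1'(t)\ge 2\bigl(|\sin\frac{1}{2\epsilon}|-|\epsilon|\bigr)^2+\epsilon^2>0$, and $\varphi_1'(1)=1$. Hence $\psi>0$ on $(0,\infty)$ and $\psi(0)=0$. Yet $\psi(t_n)=3t_n/(2\pi n)^2\to 0$ along $t_n=1+\frac{1}{2\pi n}$.

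Choose $s_0>s_1>\cdots$ among the $t_n$ with $\psi(s_{i+1})\le\frac12\psi(s_i)$. Let $X=\{x_0,x_1,\dots\}$ with $d(x_i,x_j)=s_{\min(i,j)}$ for $i\ne j$, and set $Tx_i=x_{i+1}$. This $d$ is a metric, since all nonzero distances lie in $(1,2)$. The space is complete, being uniformly discrete, and it is bounded. Then $\psi(d(Tx,Ty))\le\frac12\psi(d(x,y))$ for all $x,y$, but $T$ has no fixed point.

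So some extra hypothesis is unavoidable. Your proof becomes complete if you assume $\varphi\in C^1[0,\infty)$, or that $\psi$ is nondecreasing (for a derivative, this forces continuity). Continuity on $(0,\infty)$ alone, as you suggest, is not enough. It gives the lower bound on $[\epsilon,\delta_d(X)]$, but neither boundedness of $\psi$ near $0$ nor $\psi(t)\to 0$ as $t\to 0^+$. Also, $\psi(0)=0$ is forced rather than a matter of interpretation. Taking $x=y$ in the condition gives $\psi(0)\le k\psi(0)$, while $\psi(0)\ge 0$ because $\varphi$ is nondecreasing.

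The paper does not prove this cited result. Its closest argument is the proof of Theorem 2.3, which has your skeleton: Picard iterates, Cauchy, fixed point via the limit, uniqueness. It disposes of both steps you flagged by appealing to ``the condition on $\varphi$'', which the example above shows does not deliver them. Your Cauchy step is also the sounder one. Comparing $(x_n,x_{n+m})$ with $(x_0,x_m)$ uses $d$-boundedness exactly where it is needed.

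The paper instead applies the triangle inequality to $\psi\circ d$, claiming $\psi(d(x_n,x_{n+m}))\le\sum_i\psi(d(x_{n+i},x_{n+i+1}))$, and it bounds $\psi(d(x_0,x_1))\le\psi(\delta_d(X))$. Both steps need monotonicity or subadditivity of $\psi$. The first already fails in Example 2.4 with $\psi(t)=3t^2$ and $x_0=1$: there $\psi(d(x_0,x_2))\approx 2.78$, while $\psi(d(x_0,x_1))+\psi(d(x_1,x_2))\approx 1.60$. Your uniqueness argument, and the independence of the limit from $x$, need no extra hypothesis.
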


\section{\textbf{Main Result}}

\begin{defn} Let $(X,d)$ be a metric space. A map $T:X\mapsto X$ is called an interpolative Berinde weak operator of the derivative  type if there exist $\lambda \in [0,1)$ and $\alpha \in (0,1)$ such that
$$\frac{d\varphi}{dt}\bigg|_{t=d(Tx,Ty)} \leq \lambda \bigg[\frac{d\varphi}{dt}\bigg|_{t=d(x,y)}\bigg]^{\alpha}  \bigg[\frac{d\varphi}{dt}\bigg|_{t=d(x,Tx)}\bigg]^{1-\alpha}$$ for all $x,y\in X\backslash Fix(T)$, where $\varphi: \mathbb{R}^{+}\mapsto \mathbb{R}^{+}$ is a function such that $\frac{d \varphi}{dt}\bigg|_{t=\epsilon} >0$ for each $\epsilon>0$
\end{defn}

\begin{rem} If in Definition 2.1, we have
$$\frac{d\varphi}{dt}\bigg|_{t=d(Tx,Ty)}=\frac{d\varphi}{dt}\bigg|_{t=d(x,Tx)}=\frac{d\varphi}{dt}\bigg|_{t=d(x,y)}=t$$  then Definition 2.1 reduces to Definition 1.2
\end{rem}

\begin{thm} Let $(X,d)$ be a metric space, and $T:X\mapsto X$ be an interpolative Berinde weak operator of the derivative type. Suppose that $X$ is $d$-bounded. Let $\varphi: \mathbb{R}^{+}\mapsto \mathbb{R}^{+}$ be a function such that $\frac{d\varphi}{dt}\bigg|_{t=\epsilon} >0$ for each $\epsilon>0$. Then $T$ has a unique fixed point $x^{*}\in X$, such that for each $x\in X$, $\lim_{n\to\infty} T^{n}x=x^{*}$
\end{thm}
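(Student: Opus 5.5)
The plan is to follow the Picard-iteration argument behind Theorem 1.5, using the interpolative inequality of Definition 2.1 on consecutive iterates. Fix $x_0\in X$ and set $x_{n+1}=Tx_n$. If some $x_n$ is a fixed point we stop, so assume $x_n\notin Fix(T)$ for all $n$; then every pair of iterates is admissible in Definition 2.1. Write $\psi(t)=\frac{d\varphi}{dt}\big|_{t}$ for brevity.

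\textbf{Step 1 (consecutive terms).} Take $x=x_{n-1}$ and $y=x_n=Tx_{n-1}$ in Definition 2.1. Since $d(x,y)=d(x,Tx)$, the two factors merge and the exponents $\alpha$ and $1-\alpha$ add to $1$:
$$\psi(d(x_n,x_{n+1}))\le \lambda\,\psi(d(x_{n-1},x_n))^{\alpha}\psi(d(x_{n-1},x_n))^{1-\alpha}=\lambda\,\psi(d(x_{n-1},x_n)).$$
Inductively, $\psi(d(x_n,x_{n+1}))\le\lambda^n\psi(d(x_0,x_1))\to0$.

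\textbf{Step 2 (Cauchy property).} For $m>n$, apply Definition 2.1 with $x=x_{n-1}$ and $y=x_{m-1}$:
$$\psi(d(x_n,x_m))\le\lambda\,\psi(d(x_{n-1},x_{m-1}))^{\alpha}\,\psi(d(x_{n-1},x_n))^{1-\alpha}.$$
Here $d$-boundedness enters. All distances lie in $[0,\delta_d(X)]$, so I would take $M=\sup_{0<t\le\delta_d(X)}\psi(t)$ and assume it is finite, e.g.\ by continuity of $\psi$. Then $\psi(d(x_n,x_m))\le\lambda M^{\alpha}\big(\lambda^{n-1}\psi(d(x_0,x_1))\big)^{1-\alpha}$, which tends to $0$ uniformly in $m$.

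\textbf{Step 3 (the main obstacle).} The difficult part is turning "$\psi(d(x_n,x_m))\to0$" into "$d(x_n,x_m)\to0$". Positivity of $\psi$ on $(0,\infty)$ alone does not suffice. I would add the hypothesis that $\psi$ is nondecreasing with $\inf_{t\ge\epsilon}\psi(t)>0$ for each $\epsilon>0$, as is implicit in \cite{3}. Then for every $\epsilon>0$, eventually $\psi(d(x_n,x_m))<\inf_{t\ge\epsilon}\psi(t)$, which forces $d(x_n,x_m)<\epsilon$, so $(x_n)$ is Cauchy.

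\textbf{Step 4 (limit and uniqueness).} Completeness of $(X,d)$ is missing from the statement but is clearly intended, as in Theorem 1.5; with it, $x_n\to x^*$. Assume $x^*\ne Tx^*$ and apply the inequality with $x=x_n$, $y=x^*$:
$$\psi(d(x_{n+1},Tx^*))\le\lambda\,\psi(d(x_n,x^*))^{\alpha}\psi(d(x_n,x_{n+1}))^{1-\alpha}\to0.$$
The monotonicity hypothesis of Step 3 then gives $x_{n+1}\to Tx^*$, hence $Tx^*=x^*$, a contradiction.

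Uniqueness is delicate, because Definition 2.1 says nothing about pairs of fixed points. I would argue through the iteration instead: every orbit from a non-fixed starting point converges to a fixed point, and I would try to show this limit is independent of the starting point. If that fails, uniqueness would have to be added as an assumption or replaced by convergence of all orbits to a fixed point.
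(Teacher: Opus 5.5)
Your existence argument is correct under the hypotheses you add. It also takes a different and sounder route than the paper at both critical points.

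\textbf{Existence.} For the Cauchy property the paper uses a ``triangle inequality'' $\psi(d(x_n,x_{n+m}))\le\sum_{i=0}^{m-1}\psi(d(x_{n+i},x_{n+i+1}))$ together with the bound $\psi(d(x_0,x_1))\le\psi(\delta_d(X))$. These silently require $\psi$ to be nondecreasing and subadditive. The first already fails for $\varphi(t)=t^3$, the paper's own choice in Example 2.4, since $\psi(t)=3t^2$ is superadditive. The paper then simply asserts that $\psi(d(x_n,x_{n+m}))\to 0$ forces $d(x_n,x_{n+m})\to 0$. You instead feed the pair $(x_{n-1},x_{m-1})$ back into Definition 2.1 and let the factor $\psi(d(x_{n-1},x_n))^{1-\alpha}$ do the work, so $d$-boundedness enters genuinely, through $M$.

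For the fixed-point step the paper substitutes $x=x^*$, $y=x_n$. That needs continuity of $\psi$ and $\psi(0)=0$, and the latter does not follow from the hypothesis: for $\varphi(t)=t$ one has $\psi\equiv 1$. Your choice $x=x_n$, $y=x^*$ places the vanishing factor $\psi(d(x_n,x_{n+1}))^{1-\alpha}$ where it is needed and avoids both assumptions.

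Two small refinements:
\begin{itemize}
\item Monotonicity is superfluous in Step 3. The continuity you invoke for $M<\infty$ has to hold on the closed interval $[0,\delta_d(X)]$ anyway. Together with positivity it already gives $\min_{[\epsilon,\delta_d(X)]}\psi>0$, and no larger distances occur.
\item Completeness can be dropped. Suppose the whole orbit stays in $X\setminus Fix(T)$. Your Step 4 estimate with $y=x_0$, and then with $y=x_1$, in place of $x^*$ gives $x_{n+1}\to x_1$ and $x_{n+1}\to x_2$. Hence $x_1=x_2$, a contradiction. So every orbit reaches a fixed point after finitely many steps.
\end{itemize}

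\textbf{Uniqueness.} The genuine gap is uniqueness, together with the claim that every orbit converges to the same $x^*$. No argument can close it, because that part of the statement is false.
\begin{itemize}
\item Definition 2.1 constrains only pairs in $X\setminus Fix(T)$. So the identity on $X=\{0,1\}$ satisfies it vacuously, $X$ is complete and $d$-bounded, and there are two fixed points.
\item A non-vacuous example is $X=\{0,1,2\}\subset\mathbb{R}$ with $T0=0$, $T1=1$, $T2=0$ and $\varphi(t)=t^3$. The only admissible pair is $x=y=2$, where the inequality reads $0\le 0$, yet the orbits of $1$ and $2$ have different limits.
\end{itemize}
Hence your plan to show that the limit does not depend on the starting point must fail. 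Your fallback, existence plus convergence of each orbit to some fixed point, is the correct conclusion. The paper's uniqueness proof breaks exactly where you anticipated: it inserts two fixed points $x=Tx$, $y=Ty$ into Definition 2.1, outside its domain, and again uses $\psi(0)=0$.
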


\begin{proof} Let $x_0\in X$ be chosen arbitrarily. Define the sequence $\{x_n\}_{n=0}^{\infty}$ recursively by $x_{n+1}=Tx_n$ for all $n$. Letting $x=x_{n-1}$ and $y=x_n$ in the contractive definition of the theorem we have
\begin{align*}
\frac{d\varphi}{dt}\bigg|_{t=d(x_n,x_{n+1})} & \leq \lambda \bigg[\frac{d\varphi}{dt}\bigg|_{t=d(x_{n-1},x_n)}\bigg]^{\alpha} \bigg[\frac{d\varphi}{dt}\bigg|_{t=d(x_{n-1},x_n)}\bigg]^{1-\alpha} \\
&= \lambda \frac{d\varphi}{dt}\bigg|_{t=d(x_{n-1},x_n)}
\end{align*}
By induction we obtain
\begin{align*}
\frac{d\varphi}{dt}\bigg|_{t=d(x_n,x_{n+1})} & \leq \lambda^{n} \frac{d\varphi}{dt}\bigg|_{t=d(x_0,x_1)} \\
& \leq \lambda^{n} \frac{d\varphi}{dt}\bigg|_{t=\delta_d(X)}
\end{align*}
where $d(x_0,x_1)\leq \delta_d(X)$, and $\delta_d(X)$ is stated as in Definition 1.3. Thus making use of the above inequality and the triangle inequality we have for all $n\geq 0$ and $m\geq 1$ that 
\begin{align*}
\frac{d\varphi}{dt}\bigg|_{t=d(x_n,x_{n+m})}&\leq \frac{d\varphi}{dt}\bigg|_{t=d(x_n,x_{n+1})}+\frac{d\varphi}{dt}\bigg|_{t=d(x_{n+1},x_{n+2})}+\cdots+ \frac{d\varphi}{dt}\bigg|_{t=d(x_{n+m-1},x_{n+m})} \\
&\leq \frac{d\varphi}{dt}\bigg|_{t=\delta_d(X)} (\lambda^{n}+\lambda^{n+1}+\cdots+\lambda^{n+m-1})\\
&=\frac{d\varphi}{dt}\bigg|_{t=\delta_d(X)} \lambda ^{n} \frac{1-\lambda^{m}}{1-\lambda}\\
&\leq \frac{d\varphi}{dt}\bigg|_{t=\delta_d(X)} \frac{\lambda^{n}}{1-\lambda}~\rightarrow 0~as~n,m \rightarrow \infty
\end{align*}
Since $\frac{d\varphi}{dt}\bigg|_{t=d(x_n,x_{n+m})} \rightarrow 0$, by the condition on $\varphi$, we have $d(x_n,x_{n+m})\rightarrow 0$. This shows that
$\{x_n\}$ is a Cauchy sequence in the complete metric space $(X,d)$, ensuring its convergence to a point $x^{*}\in X$. To confirm that $x^{*}$ is a fixed point, we
substitute $x=x^{*}$ and $y=x_n$ in the contractive definition of the theorem, then we have
$$ \frac{d\varphi}{dt}\bigg|_{t=d(Tx^{*},x_{n+1})} \leq \lambda \bigg[\frac{d\varphi}{dt}\bigg|_{t=d(x^{*},x_n)}\bigg]^{\alpha} \bigg[\frac{d\varphi}{dt}\bigg|_{t=d(x^{*},Tx^{*})}\bigg]^{1-\alpha}$$ Letting $n\rightarrow \infty$ in the above inequality, we have
$$\frac{d\varphi}{dt}\bigg|_{t=d(Tx^{*},x^{*})}\leq \lambda \bigg[\frac{d\varphi}{dt}\bigg|_{t=0}\bigg]^{\alpha} \bigg[\frac{d\varphi}{dt}\bigg|_{t=d(x^{*},Tx^{*})}\bigg]^{1-\alpha}$$ The condition on $\varphi$ gives $\frac{d\varphi}{dt}\bigg|_{t=0}=0$. So from the above inequality we have
$$\frac{d\varphi}{dt}\bigg|_{t=d(Tx^{*},x^{*})} \leq 0$$ which is a contradiction. Therefore by the condition on $\varphi$ we obtain
$$\frac{d\varphi}{dt}\bigg|_{t=d(Tx^{*},x^{*})}=0$$ this leads to $d(Tx^{*},x^{*})=0$ or $Tx^{*}=x^{*}$. We now prove that $T$ has a unique fixed point. Suppose this is not true, then there exists $x=Tx$, $y=Ty$, $x\neq y$, $d(x,y)>0$. Therefore using the contractive definition of the theorem, we have
\begin{align*}
\frac{d\varphi}{dt}\bigg|_{t=d(x,y)} &= \frac{d\varphi}{dt}\bigg|_{t=d(Tx,Ty)}\\
&\leq \lambda \bigg[\frac{d\varphi}{dt}\bigg|_{t=d(x,y)}\bigg]^{\alpha} \bigg[\frac{d\varphi}{dt}\bigg|_{t=0}\bigg]^{1-\alpha}
\end{align*}
The condition on $\varphi$ gives, $\frac{d\varphi}{dt}\bigg|_{t=0}=0$. Thus from the above inequality we have $$\frac{d\varphi}{dt}\bigg|_{t=d(x,y)}\leq 0$$ a contradiction. Again the condition on $\varphi$ gives, $\frac{d\varphi}{dt}\bigg|_{t=d(x,y)}=0$, which leads to $d(x,y)=0$ or $x=y$. This completes the proof.
\end{proof}

\begin{exmp} Let $X=[0,1]$, and define $d:X\times X\mapsto \mathbb{R}^{+}$ by, $d(x,y)=\vert x-y \vert$, for all $x,y\in X$.. Let
$T:X\mapsto X$ be defined by, $T(x)=\frac{x^{2}}{3}$, for all $x\in X$, also define $\varphi:\mathbb{R}^{+}\mapsto \mathbb{R}^{+}$ by,
$\varphi(x)= x^{3}$, for all $x\in \mathbb{R}^{+}$, then all the conditions of the above theorem  are satisfied and $x=0$ is the unique fixed point. Moreover, with
$\lambda=\alpha=\frac{1}{2}$ we get the following

\begin{figure}[H]
\centering
\includegraphics[width=\columnwidth]{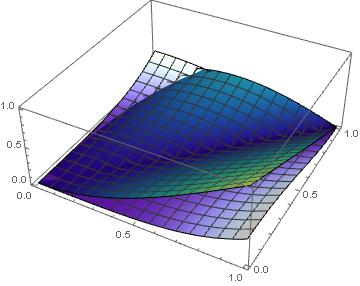}
\caption{The graph of $\frac{d\varphi}{dt}\bigg|_{t=d(Tx,Ty)}\leq \lambda \bigg[\frac{d\varphi}{dt}\bigg|_{t=d(x,y)}\bigg]^{\alpha} \bigg[\frac{d\varphi}{dt}\bigg|_{t=d(x,Tx)}\bigg]^{1-\alpha}$ for all $x,y\in (0,1]$ with left hand side being in ``LakeColors'' (bottom) and right hand side being in ``BlueGreenYellow'' (top)}
\end{figure}
\end{exmp}

\section{\textbf{Application}}

\noindent We apply our result to establish an existence theorem for non-linear  Fredholm integral equation. Let $Y=C[0,1]$ be a set of all real continuous functions on $[0,1]$ equipped with the metric $p(u,v)=\vert u-v\vert=\max_{t\in [0,1]} \vert u(t)-v(t)\vert$, for all $u,v\in C[0,1]$. Then $(Y,p)$ is a complete metric space. Now we consider the non-linear Fredholm integral equation
\begin{equation}
\tag{3.1}
u(t)=v(t)+\int_{0}^{1} K(t,s, u(s)) ds
\end{equation}
where $s,t\in [0,1]$. Assume that $K:[0,1]\times [0,1]\times Y: \mapsto \mathbb{R}$ and $v:[0,1]\mapsto \mathbb{R}$ are continuous, where $v(t)$ is a given function in $Y$

\begin{thm} Suppose $(Y,p)$ is a metric space equipped with the metric $p(u,v)=\vert u-v\vert=\max_{t\in[0,1]} \vert u(t)-v(t)\vert$ for all $u,v\in Y$, and $F:Y\mapsto Y$ be an operator on $Y$ defined by
\begin{equation}
\tag{3.2}
Fu(t)=v(t)+\int_{0}^{1} K(t,s,u(s)) ds
\end{equation}
If there exists $\lambda \in [0,1)$ and $\alpha \in (0,1)$ such that for all $u,v\in Y$, $s,t\in [0,1]$, satisfying the following inequality
$$\frac{d\varphi}{dq}\bigg|_{q=\vert K(t,s,u(s))-K(t,s,v(s))\vert} \leq \lambda \bigg[\frac{d\varphi}{dq}\bigg|_{q=\vert u(s)-v(s)\vert}\bigg]^{\alpha} \bigg[\frac{d\varphi}{dq}\bigg|_{q=\vert u(s)-Fu(s)\vert}\bigg]^{1-\alpha}$$ where $\varphi:\mathbb{R}^{+}\mapsto \mathbb{R}^{+}$ is a function defined as in Theorem 2.3. Then the integral equation (3.2) has a unique solution in $Y$
\end{thm}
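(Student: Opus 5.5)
The plan is to reduce the statement to Theorem 2.3 applied to $X=Y$, $d=p$, $T=F$. Fixed points of $F$ are exactly the solutions of the integral equation, so Theorem 2.3 will give existence and uniqueness once its hypotheses hold. There are two of them: $Y$ must be $p$-bounded, and $F$ must be an interpolative Berinde weak operator of the derivative type in the sense of Definition 2.1. Completeness of $(Y,p)$ was already noted; it is also the property the proof of Theorem 2.3 actually uses.

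\textbf{Step 1: the contractive inequality for $F$.} Fix $u,v\in Y\setminus Fix(F)$ and $t\in[0,1]$. Then
$$|Fu(t)-Fv(t)|\le \int_0^1 |K(t,s,u(s))-K(t,s,v(s))|\,ds \le \max_{s}|K(t,s,u(s))-K(t,s,v(s))|.$$
Taking the maximum over $t$ gives $p(Fu,Fv)\le \max_{t,s}|K(t,s,u(s))-K(t,s,v(s))|$. To pass this bound through $\frac{d\varphi}{dq}$, I will assume, as in the paper's example $\varphi(q)=q^3$, that $q\mapsto \frac{d\varphi}{dq}$ is nondecreasing and continuous. Then
$$\frac{d\varphi}{dq}\Big|_{q=p(Fu,Fv)} \le \max_{t,s}\frac{d\varphi}{dq}\Big|_{q=|K(t,s,u(s))-K(t,s,v(s))|}.$$
Applying the hypothesis pointwise bounds the right side by
$$\lambda\max_s \Big[\tfrac{d\varphi}{dq}\big|_{q=|u(s)-v(s)|}\Big]^{\alpha}\Big[\tfrac{d\varphi}{dq}\big|_{q=|u(s)-Fu(s)|}\Big]^{1-\alpha}.$$
Since $|u(s)-v(s)|\le p(u,v)$ and $|u(s)-Fu(s)|\le p(u,Fu)$, monotonicity bounds each factor by its value at the sup-metric. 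This yields exactly the inequality of Definition 2.1 for $F$.

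\textbf{Step 2: boundedness.} This is the main obstacle, because $C[0,1]$ with the sup metric is not $p$-bounded, so Theorem 2.3 does not apply verbatim. There are two ways around it.

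The first is to restrict to a closed ball $B=\{u: p(u,v)\le R\}$ that $F$ maps into itself. This needs an extra growth bound on $K$, for example $\int_0^1|K(t,s,u(s))|\,ds\le R$ on $B$. Then $B$ is a complete, $p$-bounded metric space and Theorem 2.3 applies there.

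The second, which I would try first, is to note that boundedness is used in the proof of Theorem 2.3 only to bound $\frac{d\varphi}{dt}$ at $d(x_0,x_1)$. One can simply replace $\delta_d(X)$ by the finite number $d(x_0,x_1)$. With that change the Cauchy estimate $\frac{d\varphi}{dt}\big|_{t=d(x_n,x_{n+m})}\le \frac{\lambda^n}{1-\lambda}\frac{d\varphi}{dt}\big|_{t=d(x_0,x_1)}$ goes through unchanged. So I would rerun that argument for the iterates $u_{n+1}=Fu_n$ rather than invoke Theorem 2.3 as a black box.

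\textbf{Step 3: conclusion.} The iterates $u_n$ are Cauchy in $(Y,p)$ and converge to some $u^*$. The fixed-point and uniqueness arguments of Theorem 2.3, which use $\frac{d\varphi}{dq}\big|_{q=0}=0$ and positivity at $\epsilon>0$, show that $Fu^*=u^*$ and that $u^*$ is the unique solution of the integral equation.

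A further caveat: the Cauchy-step deduction requires that $\frac{d\varphi}{dq}\big|_{q=d_n}\to 0$ forces $d_n\to 0$. That holds under the monotonicity and continuity assumption with $\frac{d\varphi}{dq}\big|_{q=0}=0$, and I would state it explicitly as part of the standing hypotheses on $\varphi$.
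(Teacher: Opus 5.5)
Write $\psi=\frac{d\varphi}{dq}$. Your Steps 1 and 2 are sound, and Step 1 improves on the paper. The paper pushes $\psi$ inside the integral, $\psi\big(\int_0^1 f\,ds\big)\le\int_0^1\psi(f(s))\,ds$, which is a Jensen-type step that needs $\psi$ convex; your bound by $\max_s$ uses only monotonicity. You are also right that the paper's appeal to Theorem 2.3 ignores that $C[0,1]$ is not $p$-bounded. Replacing $\delta_d(X)$ by $p(u_0,u_1)$ correctly repairs the one-step estimate $\psi(d_n)\le\lambda^n\psi(d_0)$, where $d_n=p(u_n,u_{n+1})$. Do record that Step 1 never used $u,v\notin Fix(F)$: the uniqueness argument you import applies the inequality to two fixed points, which Definition 2.1 does not cover.

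The gap is the claim in Step 2 that the Cauchy estimate ``goes through unchanged'', which Step 3 then relies on.

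\textbf{Why the step fails.} The first line of that estimate is $\psi(p(u_n,u_{n+m}))\le\sum_{k=n}^{n+m-1}\psi(d_k)$. This does not follow from the triangle inequality unless $\psi$ is nondecreasing \emph{and subadditive}. Your standing hypotheses (nondecreasing, continuous, $\psi(0)=0$, $\psi>0$ on $(0,\infty)$) do not supply subadditivity. Your own model case violates it: for $\varphi(q)=q^3$, $\psi(a+b)=3(a+b)^2>3a^2+3b^2=\psi(a)+\psi(b)$ when $a,b>0$. The paper's proof of Theorem 2.3 has the same defect, so rerunning it does not help. Nor does the one-step bound alone give the Cauchy property. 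It forces $d_n\to0$, but for $\psi(q)=e^{-1/q}$ it allows $d_n$ of order $1/n$, which is not summable.

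\textbf{A repair under your hypotheses.} Use the interpolative factor $[\psi(p(u,Fu))]^{1-\alpha}$. Suppose $\{u_n\}$ is not Cauchy. Fix $\epsilon>0$, and for each $k$ pick $n_k\ge k$ and the least $m_k>n_k$ with $p(u_{n_k},u_{m_k})\ge\epsilon$. Minimality gives $\epsilon\le p(u_{n_k},u_{m_k})<\epsilon+d_{m_k-1}$. So for large $k$ we have $p(u_{n_k},u_{m_k})<\epsilon+1$ and $p(u_{n_k+1},u_{m_k+1})\ge\epsilon-d_{n_k}-d_{m_k}\ge\epsilon/2$. Applying your Step 1 inequality to $u=u_{n_k}$, $v=u_{m_k}$ then gives
\[
0<\psi(\epsilon/2)\le\lambda\,[\psi(\epsilon+1)]^{\alpha}\,[\psi(d_{n_k})]^{1-\alpha}\le\lambda\,[\psi(\epsilon+1)]^{\alpha}\,[\lambda^{n_k}\psi(d_0)]^{1-\alpha}\longrightarrow 0,
\]
a contradiction. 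With this in place of the summed estimate, the rest of your Step 3 goes through: the limit exists, it is a fixed point by continuity of $\psi$ and $\psi(0)=0$, and uniqueness follows.
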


\begin{proof} From (3.1) and (3.2) we obtain
\begin{align*}
\frac{d\varphi}{dq}\bigg|_{q=\vert Fu(t)-Fv(t)\vert} &=\frac{d\varphi}{dq}\bigg|_{q=\bigg\vert \int_{0}^{1} K(t,s,u(s)) ds-\int_{0}^{1} K(t,s,v(s))ds \bigg\vert} \\
&\leq \frac{d\varphi}{dq}\bigg|_{q=\int_{0}^{1} \vert K(t,s,u(s))-K(t,s,v(s))\vert ds} \\
&\leq \int_{0}^{1} \frac{d\varphi}{dq}\bigg|_{q=\vert K(t,s,u(s))-K(t,s,v(s))\vert} ds\\
&\leq \int_{0}^{1} \bigg\{\lambda \bigg[\frac{d\varphi}{dq}\bigg|_{q=\vert u(s)-v(s)\vert}\bigg]^{\alpha} \bigg[\frac{d\varphi}{dq}\bigg|_{q=\vert u(s)-Fu(s)\vert}\bigg]^{1-\alpha} \bigg\} ds
\end{align*}
\noindent Taking maximum on both sides for all $t\in [0,1]$, we conclude that
$$\frac{d\varphi}{dq}\bigg|_{q=p(Fu,Fv)} \leq \lambda \bigg[\frac{d\varphi}{dq}\bigg|_{q=p(u,v)}\bigg]^{\alpha} \bigg[\frac{d\varphi}{dq}\bigg|_{q=p(u,Fu)}\bigg]^{1-\alpha} $$
Since $Y=C[0,1]$ is a complete metric space, therefore all the conditions of Theorem 2.3 are satisfied, and hence the integral equation (3.2) has a unique solution in $Y$.
\end{proof}

\end{document}